\newlength\tindent
\newtheorem{thm}{Theorem}[section]
\newtheorem{prop}[thm]{Proposition}
\newtheorem{cor}[thm]{Corollary}
\newtheorem{lem}[thm]{Lemma}
\newtheorem{ex}[thm]{Example}
\newtheorem{obs}[thm]{Observation}
\newcommand{\bpf}{\begin{proof}}
\newcommand{\epf}{\end{proof}}
\newcommand{\dsp}{\displaystyle}
\newcommand{\aS}{\operatorname{rb}}
\definecolor{ao(english)}{rgb}{0.0, 0.5, 0.0}
\definecolor{brown(traditional)}{rgb}{0.59, 0.29, 0.0}
\title{Rainbow numbers of $[n]$ for $\sum_{i=1}^{k-1} x_i = x_k$.}
\author[1]{Kean Fallon}
\author[2]{Colin Giles}
\author[3]{Hunter Rehm}
\author[1]{Simon Wagner}
\author[1]{Nathan Warnberg}
\affil[1]{Department of Mathematics and Statistics, University of Wisconsin-La Crosse, \{fallon.kean,wagner.simon,nwarnberg\}@uwlax.edu}
\affil[2]{Department of Mathematics, Iowa State University, cgiles@iastate.edu}
\affil[3]{Department of Mathematics and Statistics, University of Vermont, hunter.rehm@uvm.edu}
\date{March 19, 2020}
\begin{document}

\maketitle
\section*{Abstract}

Consider the set $\{1,2,\dots,n\} = [n]$ and an equation $eq$.  The rainbow number of $[n]$ for $eq$, denoted $\aS([n],eq)$, is the smallest number of colors such that for every exact $\aS([n], eq)$-coloring of $[n]$, there exists a solution to $eq$ with every member of the solution set assigned a distinct color.  This paper focuses on linear equations and, in particular, establishes the rainbow number for the equations $\sum_{i=1}^{k-1} x_i = x_k$ for $k=3$ and $k=4$.  The paper also establishes a general lower bound for $k \ge 5$.\\
{\bf Keywords:} anti-Ramsey, rainbow, Schur

\section{Introduction}





    Let $\{1,2,\dots,n\} = [n]$ and $eq$ be any equation.  The rainbow number of $[n]$ for $eq$, denoted $\aS([n],eq)$, is the smallest number of colors such that for every exact $\aS([n], eq)$-coloring of $[n]$, there exists a solution to $eq$ with every member of the solution set assigned a distinct color.  Although much of the literature has focused on linear equations, as does this paper, this definition is general enough to define rainbow numbers for any set and any equation.  The rainbow number is an anti-Ramsey type function since the key structures are multichromatic (or rainbow) as opposed to monochromatic.  The rainbow number function was inspired by Schur numbers (see \cite{H} and references therein, the authors originally thought of the rainbow number as the anti-Schur number) and the anti-van der Waerden function.  The anti-van der Waerden number on $[n] = \{1,\dots,n\}$, denoted $\operatorname{aw}([n],k)$, is the smallest number of colors such that every exact $\operatorname{aw}([n],k)$-coloring of $[n]$ is guaranteed to have an arithmetic progression of length $k$ where each element of the progression is colored distinctly (note that $3$ term arithmetic progressions satisfy the equation $x_1 + x_2 = 2x_3$). The anti-van der Waerden number was first defined in \cite{U} and many results on arithmetic progressions on $[n]$, the cyclic groups $\mathbb{Z}_n$, finite abelian groups and graphs have been considered directly or indirectly (see \cite{AF,SWY, BSY, DMS, J, RSW, finabgroup}).  Rainbow results have also been considered for equations of the form $a_1x_1 + a_2x_2 + a_3x_3 = b$ over $\mathbb{Z}_p$ (see \cite{BKKTTY, RFC, LM}).

    An \emph{exact $r$-coloring} $c$ of the integers $\{1,2,\dots,n\} = [n]$, is a function $c: [n] \rightarrow [r]$ such that $c$ is onto.  If $eq$ is any linear equation on $k$ variables, namely $x_1,x_2,\dots,x_k$, and non-zero coefficients, then the set $\{s_1,s_2,\dots,s_k\}$ is a \emph{rainbow solution} if substituting $s_i = x_i$ makes $eq$ true and $|\{c(s_1),c(s_2),\dots,c(s_k)\}| = k$.  Note that if a solution does not have distinct elements it cannot be a rainbow solution and such solutions will be called \emph{degenerate}, i.e.~the only solutions that will be considered are \emph{non-degenerate}.  
    
    The \emph{rainbow number} of $[n]$ for equation $eq$, denoted $\aS([n],eq)$, is the smallest $r$ such that every exact $r$-coloring of $[n]$ is guaranteed to have a rainbow solution to $eq$.  A coloring is \emph{extremal}, with respect to $eq$ and $[n]$, if it uses $\aS([n],eq) - 1$ colors and avoids rainbow solutions.  As a technical note, if $eq$ has $m$ variables and $m > n$ or there are no solution sets to $eq$ in $[n]$, define $\aS([n],eq) = n+1$.

    Some more notation and a convention about the way $[n]$ is colored are now introduced.  Let $c$ be an exact $r$-coloring on $[n]$.  Define $\mathcal{C}_i = \{ a\in [n]\, | \, c(a) = i\}$ and define $s_i \in \mathcal{C}_i$ such that $s_i$ is the smallest element of $\mathcal{C}_i$ for each color $i$.  Note that for any exact $r$-coloring $c$, it is always possible to have $s_i < s_j$ for $i < j$.  If that is not the case, say $s_i > s_j$ and $i<j$, an isomorphic coloring can be created by swapping the color of any number with color $i$ to have color $j$ and vice versa.
    
    Observation \ref{obs1} follows directly from the definition of rainbow numbers and describes the strategy used throughout the paper to find $\aS([n],eq)$.
    
    \begin{obs}\label{obs1}
       {\rm  If $c$ is an exact $(r-1)$-coloring of $[n]$ that avoids rainbow solutions for $eq$, then $r \le \aS([n],eq)$.  If every exact $r$-coloring of $[n]$ guarantees a rainbow solution to $eq$, then $\aS([n],eq) \le r$.}
    \end{obs}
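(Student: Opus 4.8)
The plan is to unwind the definition of $\aS([n],eq)$ into the two stated inequalities, treating them as a lower bound and an upper bound on this quantity. Write $R = \aS([n],eq)$, and recall that by definition $R$ is the least integer $m$ for which every exact $m$-coloring of $[n]$ is guaranteed a rainbow solution to $eq$. It is convenient to introduce the predicate $P(m)$: \emph{``every exact $m$-coloring of $[n]$ admits a rainbow solution to $eq$,''} so that $R = \min\{m : P(m)\}$. The second assertion is then immediate: if its hypothesis holds, so that every exact $r$-coloring guarantees a rainbow solution, then $P(r)$ is true, hence $r$ lies in the set whose minimum is $R$, and therefore $R \le r$.

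For the first assertion I would first record the monotonicity of $P$, namely that $P(m)$ implies $P(m+1)$. I would argue this in contrapositive form. Given an exact $(m+1)$-coloring $c'$ with no rainbow solution, recolor every element of color class $m+1$ with color $m$ to obtain a coloring $c$; this $c$ is still onto $[m]$, hence an exact $m$-coloring. Since this merging only identifies colors, it preserves distinctness in the sense that $c(a) \ne c(b)$ forces $c'(a) \ne c'(b)$. Consequently any solution that is rainbow under $c$ would already be rainbow under $c'$, which is impossible by choice of $c'$. Thus $c$ has no rainbow solution, establishing $\neg P(m+1) \Rightarrow \neg P(m)$, i.e.~$P(m)\Rightarrow P(m+1)$. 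With monotonicity in hand the set $\{m : P(m)\}$ is exactly the upward-closed tail $\{R, R+1, \dots\}$ (it is nonempty whenever a solution exists, since the $n$-coloring that assigns every element a distinct color makes any solution rainbow). The hypothesis of the first statement supplies an exact $(r-1)$-coloring with no rainbow solution, that is $\neg P(r-1)$; by upward closure this forces $r-1 < R$, so $r \le R$.

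The only genuine content beyond restating the definition is the monotonicity step, so I expect the coarsening argument to be the main obstacle, although it is routine. The remaining work is just bookkeeping around the minimum, which is why the statement can fairly be said to follow ``directly from the definition.'' Finally, I would note that the degenerate convention $\aS([n],eq) = n+1$ is consistent with this reasoning: when $eq$ has more variables than $n$ or has no solution set in $[n]$, every claim about rainbow solutions is vacuous, so both implications hold trivially and no separate case analysis disturbs the argument above.
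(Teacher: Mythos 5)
Your proof is correct. The paper gives no proof of this observation, asserting only that it ``follows directly from the definition''; your argument is precisely that direct unwinding, and the color-merging step establishing the monotonicity of $P$ is exactly the detail needed to make the first inequality rigorous (without it, $\neg P(r-1)$ alone would not rule out some smaller $m$ attaining the minimum).
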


    \begin{ex}{\rm
       Consider the equation $eq: x_1 + x_2 = x_3$ over $[5]$.  Define $c:[5] \to [3]$ by $c(1) = 1$, $c(2) = 2$, $c(3) =1$, $c(4) = 3$ and $c(5) = 1$.  Since this is an exact $3$-coloring that avoids rainbow solutions to $eq$, Observation \ref{obs1} gives $4 \le \aS([5],eq)$.  On the other hand, any exact $4$-coloring of $[5]$ must either use $4$ distinct colors in $\{1,2,3,4\}$ or $\{2,3,4,5\}$.  In the former case $\{1,2,3\}$ is a rainbow solution, and in the latter case $\{2,3,5\}$ is a rainbow solution.  Thus $\aS([5],eq) \le 4$.}

\end{ex} 
    
\section{The rainbow number of $[n]$ for the equation $x_1+x_2=x_3$}\label{sec:2}

This section focuses on the rainbow number for the equation $x_1+x_2 = x_3$ so for the rest of the section it is assumed that $eq$ is $x_1+x_2 = x_3$.  Also, when $\{a,b,c\}$ is presented as a solution to $eq$ it will be recognized that $x_1=a$, $x_2 = b$ and $x_3 = c$.  In order to discuss the results in this section it is easier if an exact $r$-coloring uses the color set $\{0,1,\dots,r-1\}$.  Lemma \ref{lower1} starts with a specific coloring that avoids rainbow solutions.

    \begin{lem}\label{lower1} 
        For $n \ge 3$, $\left\lfloor\log_2(n) + 2\right\rfloor \le \aS([n],eq)$.
        
        \bpf
        The number of trailing zeros that an integer has is the number of zeros, starting from the right, before there is a nonzero digit.  Define an exact $(\left\lfloor \log_2(n) + 1\right\rfloor)$-coloring of $[n]$ as
        $$
        c(x) = \text{the number of trailing zeros in the binary representation of $x$}
        $$
        (see Example \ref{ex:trailing}). Let $a,b\in [n]$. If $a$ and $b$ are odd, then $c(a) = c(b) = 0$ since their binary representation ends with a $1$.  This means $\{a,b, a+b\}$ is not a rainbow solution. If $a$ is odd and $b$ is even, then $a+b$ is odd so $c(a) = c(a+b) = 0$, thus $\{a,b, a+b\}$ is not a rainbow solution. Finally, consider the case where $a$ and $b$ are both even. If $a$ and $b$ have the same number of trailing zeros, then $c(a) = c(b)$ meaning $\{a,b, a+b\}$ is not a rainbow solution. If $c(a) \ne c(b)$ assume, without loss of generality, that $c(a) < c(b)$.  Since the number of trailing zeros of $b$ exceeds the number of trailing zeros of $a$, it follows, via binary arithmetic, that $b+a$ has the same number of trailing zeros as $a$.  Thus, $c(a+b) = c(a)$ and $\{a,b, a+b\}$ is not a rainbow solution. Therefore, no rainbow solutions exist with this coloring, and $\left\lfloor\log_2(n) + 2\right\rfloor \le \aS([n],eq)$.\epf
    \end{lem}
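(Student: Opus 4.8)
The plan is to apply Observation \ref{obs1} in its first form: it suffices to exhibit a single exact coloring of $[n]$ that uses $\lfloor \log_2(n) + 2\rfloor - 1 = \lfloor \log_2(n) + 1\rfloor$ colors and contains no rainbow solution, since by that observation the existence of such a coloring forces $\lfloor \log_2(n) + 2\rfloor \le \aS([n],eq)$. The coloring I would choose is designed to exploit the interaction between addition and the $2$-adic valuation, so I would color each $x \in [n]$ by its number of trailing zeros in binary, i.e.~by the exponent of the largest power of $2$ dividing $x$.

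First I would verify that this really is an exact coloring with the advertised number of colors. The element of $[n]$ with the most trailing zeros is the largest power of two not exceeding $n$, namely $2^{\lfloor \log_2 n\rfloor}$, so the colors that actually appear are exactly $0, 1, \dots, \lfloor \log_2 n\rfloor$; moreover each value $j$ in this range is realized by $2^j \le n$, so the coloring is onto and uses precisely $\lfloor \log_2 n\rfloor + 1 = \lfloor \log_2(n)+1\rfloor$ colors.

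Next I would rule out rainbow solutions by a short case analysis applied to an arbitrary candidate solution $\{a,b,a+b\}$, splitting on the trailing-zero counts $v_2(a)$ and $v_2(b)$ of the two summands. If $a$ and $b$ have the same number of trailing zeros, they receive the same color and the solution is not rainbow. Otherwise, assuming without loss of generality that $a$ has strictly fewer trailing zeros than $b$, the crucial arithmetic fact is that $a+b$ inherits exactly the trailing-zero count of $a$, so $c(a+b) = c(a)$ and again two members collide in color. In every case at least two of the three elements share a color, so no solution can be rainbow, and Observation \ref{obs1} then delivers the stated bound.

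The one step demanding care, and the main obstacle in the argument, is the claim that $v_2(a) < v_2(b)$ implies $v_2(a+b) = v_2(a)$, i.e.~that the sum acquires no extra low-order zeros. I would justify this directly from binary addition: writing $a$ and $b$ in binary, the lowest set bit of $a$ lies strictly to the right of every set bit of $b$, so nothing is carried into that position and it remains the lowest set bit of $a+b$, fixing $v_2(a+b)=v_2(a)$. With this fact established the case analysis closes cleanly and the lower bound follows.
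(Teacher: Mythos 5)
Your proposal is correct and follows essentially the same route as the paper: the identical trailing-zeros (2-adic valuation) coloring, with the case analysis reducing to the fact that $v_2(a) < v_2(b)$ forces $v_2(a+b) = v_2(a)$. Your added verification that the coloring is onto with exactly $\left\lfloor \log_2(n)+1\right\rfloor$ colors is a small point the paper leaves implicit, but the argument is the same.
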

    
    \begin{ex}\label{ex:trailing}

    {\rm A table describing the coloring from Lemma \ref{lower1}.}
    $$
    \begin{array}{|c|c|c|}
    \hline
        x & x \text{ in binary} & c(x)\\
    \hline
    1 & 1 & 0\\
    \hline
    2 & 10 & 1\\
    \hline
    3 & 11 & 0\\
    \hline
    4 & 100 & 2\\
    \hline
    5 & 101 & 0\\
    \hline
    6 & 110 & 1\\
    \hline
    7 & 111 & 0 \\
    \hline
    8 & 1000 & 3 \\
    \hline
    
    \end{array}$$

\end{ex}
    

Lemma \ref{upperhelp1} indicates that when attempting to color $[n]$ and avoid rainbow solutions, there are restrictions on how quickly new colors can be added.

    \begin{lem}\label{upperhelp1}
        Let $c$ be an exact $r$-coloring of $[n]$, with color set $\{0,1,\dots, r-1\}$, that avoids rainbow solutions, then
        \begin{enumerate}
            \item if $s_i = \ell$, then $2\ell \le s_{i+1}$ for $0 \le i \le r-2$,
            \item $2^i \le s_i$ for $0 \le i \le r-1$.
        \end{enumerate}
        
        \bpf
        For the first claim, if $\ell = 1$, then $i = 0$, so the smallest $m$ for which $s_2 = m$ is $m = 2$. If $\ell \ge 2$, let $1 \le a < \ell$ with $a+\ell \le n$. Then $c(a) \ne c(\ell)$, so $c(a + \ell)\in \{c(a), c(\ell)\}$. Thus, $c(a + \ell) \ne i + 1$, hence $2\ell \le s_{i+1}$.
        
        Proving the second claim proceeds by induction on $k$.  The base case is easily observed, in particular, $s_0 = 1 = 2^0$. For the induction hypothesis, assume $2^k \le s_k$ for $0 \le k \le r-2$.  Applying part 1 to our induction hypothesis yields $2(2^k) = 2^{k+1} \le s_{k+1}$ which completes the proof.\epf
    \end{lem}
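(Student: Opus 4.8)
The plan is to establish part 1 first and then obtain part 2 by a one-line induction. For part 1, I would lean on the ordering convention on the color minima: since the coloring is arranged so that $s_0 < s_1 < \cdots < s_{r-1}$, every integer strictly smaller than $s_i = \ell$ must receive one of the colors $0,1,\dots,i-1$ (if some $a<\ell$ had color $j\ge i$ then $s_j\le a<\ell=s_i\le s_j$, a contradiction). The goal is then to show that color $i+1$ cannot occur anywhere in the block $\{\ell,\ell+1,\dots,2\ell-1\}$; combined with the previous observation this forces the smallest element of color $i+1$ to satisfy $s_{i+1}\ge 2\ell$.

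To rule out color $i+1$ on that block I would argue element by element. The element $\ell$ itself has color $i\neq i+1$. For any $m$ with $\ell<m<2\ell$, write $m=a+\ell$ with $1\le a<\ell$, so that $\{a,\ell,m\}$ is a non-degenerate solution to $x_1+x_2=x_3$. Because $a<\ell=s_i$, the color $c(a)$ lies in $\{0,\dots,i-1\}$, and in particular $c(a)\neq c(\ell)=i$. Since $c$ avoids rainbow solutions, the colors $c(a),c(\ell),c(m)$ cannot be pairwise distinct; as $c(a)\neq c(\ell)$ is already forced, we must have $c(m)\in\{c(a),c(\ell)\}$, whence $c(m)\neq i+1$. (If $a+\ell>n$ for some choice of $a$, that value $m$ simply does not lie in $[n]$ and is ignored.) The boundary case $\ell=1$ forces $i=0$, and there $s_1>s_0=1$ already gives $s_1\ge 2=2\ell$.

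For part 2 I would induct on the index. The base case is immediate: the integer $1$ is the global minimum of $[n]$, so by the ordering convention it carries color $0$, giving $s_0=1=2^0$. For the inductive step, assuming $s_k\ge 2^k$, part 1 yields $s_{k+1}\ge 2s_k\ge 2\cdot 2^k=2^{k+1}$, which closes the induction.

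I expect the only delicate point to be the bookkeeping around the ordering convention—specifically, justifying that any integer below $s_i$ carries a strictly smaller color so that $c(a)\notin\{i,i+1\}$—together with correctly handling the boundary situations ($\ell=1$ and $a+\ell>n$). Once these are pinned down, both parts are routine.
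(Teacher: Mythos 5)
Your proposal is correct and follows essentially the same route as the paper: both arguments use the solution $\{a,\ell,a+\ell\}$ with $1\le a<\ell$ together with the ordering convention on the $s_i$ to conclude $c(a)\neq c(\ell)$ and hence $c(a+\ell)\in\{c(a),c(\ell)\}$, ruling out color $i+1$ on $\{\ell,\dots,2\ell-1\}$, and then part 2 is the same one-line induction. Your write-up is in fact more careful than the paper's about why $c(a)\notin\{i,i+1\}$ and about the boundary cases $\ell=1$ and $a+\ell>n$.
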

    

Theorem \ref{result1} follows almost directly from Lemmas \ref{lower1} and \ref{upperhelp1}.

    \begin{thm}\label{result1}
        For $n \ge 3$, $\aS([n],eq) = \left\lfloor\log_2(n) + 2 \right\rfloor$.
        
        \bpf
        By Lemma \ref{lower1} $\left\lfloor\log_2(n) + 2\right\rfloor \le \aS([n],3)$.  Let $r = \left\lfloor\log_2(n) + 2\right\rfloor$ and $c$ be an exact $r$-coloring, with color set $\{0,1,\dots,r-1\}$, of $[n]$ that avoids rainbow solutions. Then, by Lemma \ref{upperhelp1}, $2^{r-1} \le s_{r-1}$, so
        $$
        2^{r-1} = 2^{\left\lfloor\log_2(n) + 1\right\rfloor} \le s_{r-1}.$$

        
        Thus, $n < s_{r-1}$.  So if color $r-1$ appears in a coloring of $[n]$ there must be a rainbow solution, a contradiction that such a coloring exists.  Therefore, $\aS([n],eq) = \left\lfloor\log_2(n) + 2 \right\rfloor$.\epf
    \end{thm}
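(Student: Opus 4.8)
The plan is to prove the two inequalities separately, while recognizing that nearly all the work has already been done in the preceding lemmas. The lower bound $\lfloor\log_2(n)+2\rfloor \le \aS([n],eq)$ is exactly the statement of Lemma~\ref{lower1}, so no additional argument is needed there; the real content of the theorem is matching it with a coinciding upper bound.

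For the upper bound I would appeal to Observation~\ref{obs1}: it suffices to show that every exact $r$-coloring of $[n]$ with $r = \lfloor\log_2(n)+2\rfloor$ is forced to contain a rainbow solution. I would argue by contradiction, supposing some such coloring $c$ avoids rainbow solutions. Adopting the color set $\{0,1,\dots,r-1\}$ together with the convention $s_0 < s_1 < \cdots < s_{r-1}$ on the smallest representatives of the color classes, Lemma~\ref{upperhelp1}(2) applied with $i = r-1$ immediately yields $2^{r-1} \le s_{r-1}$.

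The crux is then to compare $2^{r-1}$ with $n$. Writing $r - 1 = \lfloor\log_2(n)+1\rfloor = \lfloor\log_2(n)\rfloor + 1$, the two-sided inequality $2^{\lfloor\log_2(n)\rfloor} \le n < 2^{\lfloor\log_2(n)\rfloor + 1}$ that defines the floor of the logarithm gives $n < 2^{r-1}$. Chaining this with the bound from Lemma~\ref{upperhelp1}(2) produces $n < 2^{r-1} \le s_{r-1}$, so $s_{r-1} > n$. But $s_{r-1}$ is by definition the smallest element of a nonempty color class, hence an element of $[n]$, so $s_{r-1} \le n$, a contradiction. Therefore color $r-1$ cannot actually appear in any rainbow-avoiding coloring, meaning no exact $r$-coloring avoids rainbow solutions, and $\aS([n],eq) \le r$ follows, completing the equality.

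I expect no genuine obstacle here: once Lemma~\ref{upperhelp1}(2) is available, the entire upper bound collapses to the elementary observation that the index forced by the doubling estimate already outgrows $n$. The only place demanding a little care is the floor arithmetic---extracting the integer $1$ from $\lfloor\log_2(n)+1\rfloor$ and invoking the defining bound for $\lfloor\log_2(n)\rfloor$---but this is routine bookkeeping rather than a delicate step, and it should dispatch cleanly for all $n \ge 3$.
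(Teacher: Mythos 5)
Your proposal is correct and follows essentially the same route as the paper: the lower bound is quoted from Lemma~\ref{lower1}, and the upper bound contradicts Lemma~\ref{upperhelp1}(2) by showing $s_{r-1} \ge 2^{r-1} = 2^{\lfloor\log_2(n)\rfloor+1} > n$. The only difference is that you spell out the floor-of-logarithm arithmetic a bit more explicitly than the paper does.
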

    

\section{The rainbow number of $[n]$ for the equation $x_1 + x_2 + x_3 = x_4$}\label{sec:3}



This section defines $eq: x_1 + x_2 + x_3 = x_4$ (except in Theorem \ref{lem:genlower}) and an exact $r$-coloring will use color set $[r]$.  Lemma \ref{lem:mins1} is used to define a coloring on $[n]$ that avoids rainbow solutions which gives a lower bound on $\aS([n],eq)$. It is interesting to note that the equations in Section \ref{sec:2} and \ref{sec:3} are similar but one result is logarithmic and the other is linear.

    \begin{lem}\label{lem:mins1}


    For $\ell \ge 2$, let \{$b_1,b_2 \ldots, b_{\ell}\} \subseteq [n]$, with $b_i < b_{i+1}$ for $1 \le i \le \ell -1$, such that $\sum\limits_{i=1}^{\ell} b_i \geq n$.  The smallest $b_{\ell}$ that satisfies these conditions gives a corresponding maximum $b_1$ value of $$b_1 = \left\lceil\frac{2n-\ell(\ell-1)}{2\ell}\right\rceil.$$
\end{lem}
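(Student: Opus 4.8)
The plan is to treat this as a two-stage optimization: first pin down the smallest attainable value of $b_\ell$, call it $m$, and then, among all admissible configurations with $b_\ell = m$, maximize $b_1$. The observation driving the first stage is that once $b_\ell$ is fixed, the strict-increase condition forces $b_i \le b_\ell - (\ell - i)$ for each $i$, so the sum is largest exactly when the $b_i$ form the consecutive run $b_\ell - \ell + 1,\, b_\ell - \ell + 2,\, \ldots,\, b_\ell$. First I would record the resulting bound
$$
\sum_{i=1}^{\ell} b_i \;\le\; \ell b_\ell - \frac{\ell(\ell-1)}{2},
$$
so that the requirement $\sum b_i \ge n$ can be met only when $\ell b_\ell - \frac{\ell(\ell-1)}{2} \ge n$.

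Solving this inequality for $b_\ell$ gives $b_\ell \ge \frac{2n + \ell(\ell-1)}{2\ell}$, so the smallest admissible value is $m = \left\lceil \frac{2n+\ell(\ell-1)}{2\ell}\right\rceil$, and the consecutive run ending at $m$ witnesses that this value is actually achieved. For the second stage I would fix $b_\ell = m$ and argue by a counting (pigeonhole) step: the numbers $b_1 < b_2 < \cdots < b_\ell = m$ are $\ell$ distinct integers lying in $[b_1, m]$, which is possible only if $m - b_1 + 1 \ge \ell$, i.e.\ $b_1 \le m - (\ell - 1)$. Since the consecutive run $m-\ell+1, \ldots, m$ attains this bound and already satisfies $\sum b_i \ge n$ by the choice of $m$, the maximum value of $b_1$ is exactly $m - (\ell-1)$.

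The final step is purely arithmetic: I would simplify $m - (\ell-1)$ using the fact that subtracting an integer commutes with the ceiling function,
$$
m - (\ell-1) = \left\lceil \frac{2n+\ell(\ell-1)}{2\ell}\right\rceil - (\ell-1) = \left\lceil \frac{2n+\ell(\ell-1)}{2\ell} - (\ell-1)\right\rceil = \left\lceil \frac{2n - \ell(\ell-1)}{2\ell}\right\rceil,
$$
which is the claimed formula. I do not expect either optimization stage to be difficult, since in both the extremal configuration is simply a consecutive block placed as high as the range allows; the only point demanding care is the passage between the two ceiling expressions, where I must use that $\frac{\ell(\ell-1)}{2\ell} = \frac{\ell-1}{2}$ shifts the numerator by exactly $\ell(\ell-1)$, together with the identity $\lceil x - k\rceil = \lceil x\rceil - k$ for integer $k$. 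I would also note the implicit feasibility assumption that $n$ is large enough for such a set to exist (so that $m \le n$ and $b_1 \ge 1$), which the surrounding application is meant to guarantee.
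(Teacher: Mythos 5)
Your proof is correct and takes essentially the same route as the paper's: both reduce to the observation that the extremal configuration is the consecutive block $b_1, b_1+1, \ldots, b_1+\ell-1$, compute its sum (you via the arithmetic-series bound, the paper via a difference of triangular numbers), and finish by solving for $b_1$ and taking a ceiling. Your explicit two-stage optimization and the $\lceil x-k\rceil = \lceil x\rceil - k$ bookkeeping simply spell out what the paper compresses into its opening sentence, so there is no substantive difference.
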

\begin{proof}
    Notice that by first minimizing $b_{\ell}$ and then maximizing $b_1$ gives $b_{i+1} = b_{i}+1$, for $1 \le i \le \ell - 1$, and $b_{\ell} = b_1+\ell-1$.
     Observe,

    
        $$\sum\limits_{i=1}^{\ell} b_i \geq n. $$
        
            Well-known facts about triangular numbers gives the equivalent inequality
            
            $$\frac{b_{\ell}(b_{\ell}+1)}{2}-\frac{(b_1-1)b_1}{2} \geq n,$$
            
        
        and solving for $b_1$ yields
        
        

        $$b_1 \geq \frac{2n - \ell(\ell-1)}{2\ell}.$$
     Since $b_1$ is an integer, and the $b_i$'s are consecutive, it can be concluded that

       $$ b_1 = \left\lceil \frac{2n - \ell(\ell-1)}{2\ell} \right\rceil. $$ \end{proof}
  
   
    \begin{thm}\label{lem:genlower}

Let $eq: \sum_{i=1}^{k-1} x_i = x_k$, $k \ge 4$ and define $L = \left\lceil\frac{2n-\ell(\ell-1)}{2\ell}\right\rceil$ for $\ell = k-2$.  Then

$$\aS([n],eq) \ge  
        \left\{\begin{array}{ll}
            n+1 & \text{if } n<\frac{(k-1)k}{2}, \\
            n-L+3  & \text{otherwise.}
      
\end{array}
\right.$$

\end{thm}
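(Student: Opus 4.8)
The plan is to handle the two cases separately and, in each, invoke Observation~\ref{obs1}, which reduces a lower bound $\aS([n],eq) \ge r$ to exhibiting an exact $(r-1)$-coloring that avoids rainbow solutions. The first case is essentially bookkeeping: when $n < \frac{(k-1)k}{2}$, the smallest possible value of $x_k$ in any non-degenerate solution is $1 + 2 + \cdots + (k-1) = \frac{(k-1)k}{2}$, realized by taking the $k-1$ distinct summands to be $1, 2, \ldots, k-1$. Since this exceeds $n$, no solution set to $eq$ lies inside $[n]$, so by the technical convention $\aS([n],eq) = n+1$, giving $\aS([n],eq) \ge n+1$ as claimed.

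For the main case $n \ge \frac{(k-1)k}{2}$, I would produce an exact $(n-L+2)$-coloring of $[n]$ with no rainbow solution; Observation~\ref{obs1} then yields $\aS([n],eq) \ge n-L+3$. The coloring I would use assigns a single color to the block $\{1, 2, \ldots, L-1\}$ and a distinct color to each of $L, L+1, \ldots, n$. This is onto and uses exactly $1 + (n-L+1) = n-L+2$ colors, provided the block is nonempty and $L \le n$. I expect both facts to follow from $n \ge \frac{(k-1)k}{2}$ by a short estimate: dropping the ceiling gives $L \ge \frac{2n-\ell(\ell-1)}{2\ell}$, and plugging in $n \ge \frac{(\ell+1)(\ell+2)}{2}$ (recall $k=\ell+2$) shows this quantity exceeds $2$, so $L \ge 2$, while $L$ is of order $n/\ell$ and hence well below $n$.

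The heart of the argument is verifying that this coloring admits no rainbow solution. Suppose $\{x_1, \ldots, x_{k-1}, x_k\}$ were one. All $k$ values carry distinct colors, and the block $\{1,\ldots,L-1\}$ is the unique color class of size greater than one, so at most one of the $k$ values lies in the block; in particular at most one of the $k-1$ summands $x_1, \ldots, x_{k-1}$ is less than $L$. Hence at least $k-2 = \ell$ of the summands are distinct integers that are $\ge L$, so their sum is at least $L + (L+1) + \cdots + (L+\ell-1) = \sum_{i=1}^{\ell} b_i$, where $b_i = L+i-1$. By the choice of $L$ in Lemma~\ref{lem:mins1}, these consecutive values satisfy $\sum_{i=1}^{\ell} b_i \ge n$. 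Since there are $\ell+1$ summands in all, at least one more (positive) summand remains, so $x_k = \sum_{i=1}^{k-1} x_i \ge \left(\sum_{i=1}^{\ell} b_i\right) + 1 \ge n+1 > n$, contradicting $x_k \in [n]$. Thus no rainbow solution exists.

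The step I expect to be the main obstacle is connecting the counting to Lemma~\ref{lem:mins1} cleanly: one must argue that the $\ell = k-2$ smallest integers that are at least $L$, namely the consecutive block $L, L+1, \ldots, L+\ell-1$, have sum at least $n$. Unwinding the ceiling in $L = \lceil (2n-\ell(\ell-1))/(2\ell)\rceil$ gives $2\ell L + \ell(\ell-1) \ge 2n$, i.e.\ $\sum_{i=1}^{\ell}(L+i-1) \ge n$, so the needed inequality is immediate once the right block of summands is identified. The remaining care is the edge-case verification that the block is nonempty and does not exhaust $[n]$, together with the subtlety that $x_k$ itself cannot lie in the block: if it did, every smaller summand would too, forcing all $k$ values into one color class, which is again not rainbow.
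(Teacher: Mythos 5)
Your proposal is correct and follows essentially the same route as the paper: the same convention-based argument when $n<\frac{(k-1)k}{2}$, the same block coloring ($\{1,\dots,L-1\}$ in one color, each of $L,\dots,n$ its own color), the same count of $n-L+2$ colors, and the same appeal to the content of Lemma~\ref{lem:mins1} to show any would-be rainbow solution forces $x_k>n$. Your added details (checking $L\ge 2$, that $x_k$ cannot sit in the block, and unwinding the ceiling to get $\ell L+\frac{\ell(\ell-1)}{2}\ge n$) only make explicit what the paper leaves implicit.
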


\begin{proof} The lower bound of $n+1$ (which is actually an equality) occurs when $[n]$ does not have any solutions to $eq$ and is based on $\sum_{i=1}^{k-1} i = \dfrac{(k-1)k}{2}$. Hence every integer in $[n]$ can be colored distinctly and $\aS([n],eq) = n+1$ by definition.  If $[n]$ does have solutions to $eq$, then consider the coloring $c$ on $[n]$ where

$$c(x) = \left\{\begin{array}{cc}

    1 & \text{ if $1 \le x \le L-1$,}\\
    
    i+2 & \text{ if $x = L+i$.}\\

\end{array} \right. $$

This coloring avoids rainbow solutions since any rainbow solution must have at least $k-1$ elements that are bigger than $L$ and Lemma \ref{lem:mins1} indicates that the sum of these elements is greater than $n$.  Thus, no rainbow solutions exist.

It remains to count how many colors are used.  The number of elements that are not colored $1$ is $n-L+1$, thus $c$ uses $n-L+2$ colors.  Since $c$ avoids rainbow solutions, this gives $n-L+3 \le \aS([n],eq)$.  \end{proof}

     

    \begin{cor}\label{lower2}
        For $n \ge 5$, $\left\lfloor\dfrac{1}{2}(n+7)\right\rfloor \le \aS([n],eq)$.
    \end{cor}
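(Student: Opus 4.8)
The plan is to obtain this as a direct specialization of Theorem \ref{lem:genlower} to $k=4$, followed by simplification of the resulting ceiling expression. Setting $k=4$ gives $\ell = k-2 = 2$, so the threshold becomes $\frac{(k-1)k}{2} = 6$ and
$$L = \left\lceil \frac{2n - \ell(\ell-1)}{2\ell}\right\rceil = \left\lceil \frac{2n-2}{4}\right\rceil = \left\lceil \frac{n-1}{2}\right\rceil.$$
The goal is then to show that whichever branch of Theorem \ref{lem:genlower} applies, the lower bound it yields equals (or exceeds) the claimed $\left\lfloor\frac12(n+7)\right\rfloor$.

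First I would dispose of the boundary case $n=5$, the only value of $n \ge 5$ for which the condition $n < 6$ of the first branch holds. There Theorem \ref{lem:genlower} gives $\aS([n],eq) \ge n+1 = 6$, and a direct evaluation gives $\left\lfloor\frac12(5+7)\right\rfloor = 6$, so the inequality holds; in fact with equality, since $1+2+3 > 5$ means $[5]$ contains no non-degenerate solution to $eq$.

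For $n \ge 6$ I would invoke the second branch, $\aS([n],eq) \ge n - L + 3$, and reduce $n - L + 3$ to the stated floor by a short parity computation on $L = \lceil (n-1)/2\rceil$. Writing $n = 2m$ yields $L = m$ and $n - L + 3 = m+3$, while writing $n = 2m+1$ yields $L = m$ and $n - L + 3 = m+4$; in both cases one checks this equals $\left\lfloor\frac12(n+7)\right\rfloor$, which finishes the proof.

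The only real work is the ceiling/floor bookkeeping, so the main obstacle is simply keeping the parity cases straight when simplifying $\lceil (n-1)/2\rceil$ and verifying that the value $n+1$ delivered by the first branch at $n=5$ agrees with the floor formula. There is no substantive combinatorial content beyond what Theorem \ref{lem:genlower} already supplies; the corollary is essentially a clean restatement of that bound in the case $k=4$.
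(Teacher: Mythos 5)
Your proposal is correct and follows the same route as the paper: both simply specialize Theorem \ref{lem:genlower} to $k=4$, $\ell=2$, $L=\left\lceil\frac{n-1}{2}\right\rceil$ and simplify. You merely spell out the parity bookkeeping and the $n=5$ boundary case that the paper leaves implicit.
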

    
    \bpf As noted in the introduction to Section \ref{sec:3}, $eq$ in this problem has $k=4$ and $\ell = 2$, thus so $L = \left\lceil \frac{n -1}{2} \right\rceil$.  Applying Theorem \ref{lem:genlower} gives the result. \epf

    \begin{prop}\label{uniquelower1}
        If $n$ is odd and $n\ge5$, then $\aS([n],eq) = \dfrac{1}{2}(n+7)$ and there is a unique extremal coloring of $[n]$.
    \end{prop}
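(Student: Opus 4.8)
The plan is to pair the lower bound already in hand with a matching upper bound obtained by analyzing the set of color representatives, and then to read off uniqueness from the equality case of that bound. First I would record the lower bound: since $n$ is odd, $n+7$ is even, so $\lfloor\frac12(n+7)\rfloor=\frac12(n+7)$ and Corollary~\ref{lower2} already gives $\frac12(n+7)\le\aS([n],eq)$. The work is to prove $\aS([n],eq)\le\frac12(n+7)$ and that the coloring of Theorem~\ref{lem:genlower} is the only extremal one. The key reduction is to pass from colorings to sets: if $c$ is any exact coloring avoiding rainbow solutions and $R$ is its set of representatives (the least element of each color class), then $|R|$ is the number of colors, $1\in R$, and distinct elements of $R$ carry distinct colors; hence any four distinct elements of $R$ with $a+b+c=d$ would be a rainbow solution, so $R$ contains no such quadruple. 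It therefore suffices to bound solution-free sets containing $1$.

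For the upper bound, write $R\setminus\{1\}=\{t_1<\cdots\}$ and treat $t_1\ge\frac{n-1}{2}$ by the direct estimate $R\subseteq\{1\}\cup[\frac{n-1}{2},n]$, so assume $t_1<\frac{n-1}{2}$. I would split the remaining elements into $A=R\cap(t_1,\,n-1-t_1]$ and $B=R\cap(n-1-t_1,\,n]$. For each $a\in A$ the quadruple $\{1,t_1,a,1+t_1+a\}$ is a genuine solution with sum at most $n$, so solution-freeness forces $1+t_1+a\notin R$; thus $\phi(a)=1+t_1+a$ injects $A$ into $(t_1,n]\setminus R$. As $A$, $B$, and $\phi(A)$ are pairwise disjoint subsets of the interval $(t_1,n]$ of size $n-t_1$, this yields $2|A|+|B|\le n-t_1$, and with the trivial $|B|\le t_1+1$ I obtain
$$|R|=|A|+|B|+2\le\frac{n-t_1+|B|}{2}+2\le\frac{n-t_1+(t_1+1)}{2}+2=\frac{n+5}{2},$$
so every rainbow-free coloring uses at most $\frac12(n+5)$ colors and $\aS([n],eq)=\frac12(n+7)$.

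For uniqueness I would analyze when both inequalities are simultaneously tight. Equality forces $B=[n-t_1,n]$ (the full top block) and $A\sqcup\phi(A)=J:=\{t_1+1,\dots,n-t_1-1\}$, i.e. $J$ is exactly tiled by $A$ and its shift $A+(t_1+1)$. A short peeling argument shows $\min J\in A$, which forces the first block $\{t_1+1,\dots,2t_1+1\}\subseteq A$ and $\max A=n-2t_1-2$; but then $\{1,\,t_1+1,\,n-2t_1-2,\,n-t_1\}$ is a quadruple of distinct elements of $R$ with $1+(t_1+1)+(n-2t_1-2)=n-t_1\in B\subseteq R$, a genuine solution — contradiction. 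Hence $A=\varnothing$, which forces $t_1=\frac{n-1}{2}$ and $R=\{1\}\cup[\frac{n-1}{2},n]$. Since the only representative below $\frac{n-1}{2}$ is $1$, every non-representative in $\{2,\dots,\frac{n-3}{2}\}$ must take the color of $1$, so the coloring is forced to be exactly the one of Theorem~\ref{lem:genlower}.

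The main obstacle is the uniqueness step. The clean counting uses only the special solutions $\{1,t_1,a\}$, and there genuinely exist sets meeting the counting bound that are not solution-free (for example $\{1,2,3,4,5,9,10,11\}$ when $n=11$, which contains $1+3+5=9$). Ruling these out requires the tiling analysis of the equality case together with the extra solution $\{1,t_1+1,n-2t_1-2,n-t_1\}$; the delicate point is verifying these four elements are distinct, which needs $n\ge 4t_1+3$ — a fact I would extract from the forced block structure (the first block has $t_1+1\ge 3$ elements, so $\max A\ge 2t_1+1$, while $\max A=n-2t_1-2$).
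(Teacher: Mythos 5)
Your argument is correct, but it takes a genuinely different route from the paper. The paper proves the proposition by induction on odd $n$ in steps of two (base cases $n=5,7$), where the induction hypothesis includes uniqueness: the known unique extremal coloring of $[n]$ (and of the nested intervals $[n-2j+2]$, via the inequality it labels (*)) is used to pin down each $s_i$ in $[n+2]$, and the proof closes with a pairing argument on the sets $\{x,n-x\}$ coming from the solutions $\{1,x,n-x,n+1\}$. You instead give a direct, non-inductive argument: you pass to the set $R$ of color representatives, observe that rainbow-freeness makes $R$ free of solutions with four distinct elements, and bound $|R|$ by the injection $a\mapsto 1+t_1+a$ from $A=R\cap(t_1,n-1-t_1]$ into $(t_1,n]\setminus R$, getting $|R|\le\frac{1}{2}(n+5)$; uniqueness then falls out of the equality case (the tiling $J=A\sqcup(A+t_1+1)$ plus the extra solution $\{1,t_1+1,n-2t_1-2,n-t_1\}$). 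Both proofs exploit the same family of solutions $\{1,x,y,1+x+y\}$, but your organization buys a self-contained proof that isolates the combinatorial core as an extremal statement about solution-free sets --- plausibly more adaptable to $k\ge 5$ --- while the paper's induction gets value and uniqueness simultaneously at the cost of carrying uniqueness through every smaller odd $n$. Two details you should write out: the containment $\{t_1+1,\dots,2t_1+1\}\subseteq J$ is not automatic from $t_1<\frac{n-1}{2}$ alone, but follows because $t_1+1\in A$ forces $\phi(t_1+1)=2t_1+2\in J$ (and if that fails, $\phi(A)\cap J=\varnothing$ forces $A=J=\varnothing$, contradicting $|J|\ge 2$); and $\max A=n-2t_1-2$ uses that $n-t_1-1\notin A$ because $\phi(n-t_1-1)=n$ lies in $B\subseteq R$. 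You have correctly flagged the one genuinely delicate point, the distinctness of the final quadruple via $n\ge 4t_1+3$.
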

    
    \begin{proof} When $n=5$ the result is immediate.  When $n=7$ the lower bound is a result of Corollary \ref{lower2}.  The upper bound is trivial since the only coloring of $[7]$ with seven colors is to color each number distinctly.  If $c'$ is an extremal coloring of $[7]$, then $6$ colors are used. Since $\{1,2,3,6\}$ and $\{1,2,4,7\}$ are both solutions, $c'(1) = c'(2)$ and the coloring is unique.
    
    For the induction hypothesis, assume that for all odd $k$ with $7\le k \le n$, $n$ odd, that $\aS([k],eq) = \dfrac{1}{2}(k+7)$ and there is a unique extremal coloring of $[k]$, namely the coloring provided in Theorem \ref{lem:genlower}.  Define $r = \dfrac{1}{2}((n+2)+7)$, $L = \dfrac{n-1}{2}$ and let $c$ be an exact $r$-coloring of $[n+2]$ with color set $\{1,\dots,r\}$.  The induction hypothesis implies that if $r$ or $r-1$ colors appear in $[n]$, then there is a rainbow solution, so $s_{r} = n+2$ and $s_{r-1} = n+1$.  The induction hypothesis also implies that if $r-2$ colors appear in $[n]$, there is a unique coloring of $[n]$ with $r-2$ colors that avoids rainbow solutions.  However, applying the unique coloring yields $c(1)= 1$, $c(L) = 2$, $c(L+1) = 3$ and $c(n+1) = r-1 > 3$, thus $\{1,L,L+1 ,n+1\}$ is a rainbow solution.   Therefore, $\aS([n+2],eq) \le \dfrac{1}{2}(n+9)$.  Equality comes from the lower bound in Corollary \ref{lower2}.
    
    Now let $c$ be an exact $(r-1)$-coloring of $[n+2]$, with color set $\{1,2,\dots,r-1\}$, that avoids rainbow solutions.  If $r-1$ colors appear in $[n]$ the inductive hypothesis implies there is a rainbow solution, thus $n+1 \le s_{r-1} \le n+2$.
    
    If $r-2$ colors appear in $[n]$, then $[n]$ has the coloring from Theorem \ref{lem:genlower}.  This means $s_2 = L$ and $s_3 = L+1$.  Hence, either $\{1,L,L+1,n+1\}$ or $\{2,L,L+1,n+2\}$ is a rainbow solution since $n+1 \le s_{r-1} \le n+2$.  Therefore, $n < s_{r-2}$ which implies $s_{r-2} = n+1$ and $s_{r-1} = n+2$.
    
    Now consider the case where $r-3$ colors appear in $[n]$.  
    If $r-j-1$ colors appear in $[n-2j+2]$, for $2\le j \le \dfrac{n+3}{4}$, then the coloring from Theorem \ref{lem:genlower} gives $L_j = \dfrac{n-2j+2-1}{2} = L-j+1 = s_2$, $L-j+2 = s_3$, $\dots$ ,$n-2j +2 = s_{r-j-1}$.  However, notice that $$L-j+1 \le L < L+1 \le n-2j+2 < n+1,$$ thus $\{1,L,L+1,n+1\}$ is a rainbow solution which implies $n-2j+3 \le s_{r-j-1}$.
    
    Combining this with $s_i < s_{i+1}$ gives
    
    \begin{equation}\tag{*} n-2j+3 \le s_{r-j-1} \le n-j+2.\label{ineq:*}\end{equation}
    
    Note that Inequality (\ref{ineq:*}) gives $s_2 =s_{r-(r-3)- 1} \le n-(r-3)+2 = L+1$.  The remainder of the proof will show that if $s_2 \neq L+1$, then there is a rainbow solution.  This then implies that $s_2 = L+1$ and $c$ must be the coloring from Theorem \ref{lem:genlower}.
    
    Set $[n+2] = \{1\} \cup A \cup B \cup \{n-1,n,n+1,n+2\}$ with $A = \{2,3,\dots,L\}$ and $B = \{L+1,L+2,\dots,n-2\}$.  Note that $|A| = |B| = L-1 = \dfrac{n-3}{2}$ and that $s_2\in A$.  Also, for any $x\in A$, the set $\{1,x,n-x,n+1\}$ is a solution to $eq$ and $c(1) = 1\neq r-2 = c(n+1)$.  This implies that each element of the pair $\{x,n-x\}$ is either the same color or at least one of them is color $1$.  Further, since $x\in A$ and $n-x\in B$, $|c(A\cup B)|$ uses at most $L-1$ of the remaining $|\{2,3,\dots,r-3\}| = r-4 = L+1$ colors.  This forces $c(n-2) = r-4$, $c(n-1) = r-3$ and each pair $\{x,n-x\}$ contributing one color from $\{2,3,\dots,r-5\}$ that is not accounted for among any other $\{y,n-y\}$ pair for $y\in A$.  However, now $\{1,s_2,n-s_2+1,n+2\}$ is a rainbow solution.  To see this, observe that $c(s_2-1) = 1$ and if $c(n-s_2+1) = 2$, then the $\{s_2-1,n-s_2+1\}$ pair contributes the color $2$ so the $\{s_2,n-s_2\}$ pair must contribute a color that is not $2$. This means $c(n-s_2) \neq 2$ which makes $\{1,s_2,n-s_2,n+1\}$ a rainbow solution. \end{proof}

    \begin{lem}\label{upper1}
        If $n\ge 5$ is odd, then $\aS([n+1],eq) \le \dfrac{1}{2}(n+7)$.
        
        \bpf
       Let $r = \dfrac{1}{2}(n+7)$ and $c$ be an exact $r$-coloring of $[n+1]$. If $r$ colors appear in $[n]$, then, by Proposition \ref{result3}, there exists a rainbow solution. Thus $c(n+1) = r$ and $r-1$ colors appear in $[n]$.  Note that there is unique extremal coloring of $[n]$ with $r-1$ colors, thus $\left\{1,\dfrac{n-1}{2},\dfrac{n+1}{2}, n+1\right\}$ is a rainbow solution since $c(1) = 1$, $c((n-1)/2) =2$, $c((n+1)/2) = 3$ and $c(n+1) = r$. Therefore, $\aS([n+1],eq) \le \dfrac{1}{2}(n+7)$.\epf
    \end{lem}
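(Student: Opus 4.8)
The plan is to establish the upper bound via the second half of Observation~\ref{obs1}: I want to show that every exact $r$-coloring of $[n+1]$ with $r = \frac{1}{2}(n+7)$ is forced to contain a rainbow solution to $eq: x_1+x_2+x_3=x_4$. So I fix such a coloring $c$ and study its restriction to the initial segment $[n]$, exploiting the already-established value $\aS([n],eq) = \frac{1}{2}(n+7)$ together with the rigidity clause of Proposition~\ref{uniquelower1} (recall $n$ is odd, so $L = \frac{n-1}{2}$ is an integer).

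The natural dichotomy is on how many of the $r$ colors appear on $[n]$. If all $r$ colors already occur there, then $c$ restricted to $[n]$ is an exact $r$-coloring of $[n]$; since $\aS([n],eq) = r$, the definition of the rainbow number forces a rainbow solution inside $[n] \subseteq [n+1]$, and we finish immediately. Otherwise at most $r-1$ colors appear on $[n]$, and because $c$ is onto $[r]$ the missing color must be the color of $n+1$. Thus exactly $r-1$ colors appear on $[n]$ and $c(n+1)$ is a brand-new color, which (using the relabeling convention, since $n+1$ is the largest element and the sole member of its color class) I may take to be color $r$.

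In this remaining case $c$ restricted to $[n]$ is an exact $(r-1)$-coloring. Either it contains a rainbow solution, which then lies in $[n+1]$ and we are done, or it avoids rainbow solutions and is therefore an extremal coloring of $[n]$. This is where the uniqueness part of Proposition~\ref{uniquelower1} does the heavy lifting: the only rainbow-free $(r-1)$-coloring of $[n]$ is the explicit block coloring of Theorem~\ref{lem:genlower}, for which $c(1)=1$, $c(L)=2$, and $c(L+1)=3$ with $L=\frac{n-1}{2}$. I will then exhibit the single solution $\{1,\frac{n-1}{2},\frac{n+1}{2},n+1\}$; one checks $1+\frac{n-1}{2}+\frac{n+1}{2}=n+1$, and its four members carry colors $1,2,3,r$, which are distinct because $r=\frac{1}{2}(n+7)>3$ for $n\ge 5$. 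Hence this is a rainbow solution, contradicting rainbow-freeness, and the bound follows. The step I expect to be the crux is precisely this reduction to the unique extremal coloring: the argument is clean only because Proposition~\ref{uniquelower1} pins down both the value and the exact positions of colors $2$ and $3$ on $[n]$, so that the lone added point $n+1$, carrying a fresh top color, completes a rainbow solution. The side conditions to confirm, namely $L\ge 2$ (so that $1$ and $L$ get different colors) and $r>3$, both hold for $n\ge 5$ and are routine.
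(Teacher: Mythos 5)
Your proposal is correct and follows essentially the same argument as the paper: split on whether all $r$ colors appear in $[n]$, invoke $\aS([n],eq)=\tfrac{1}{2}(n+7)$ in the first case, and in the second use the uniqueness of the extremal $(r-1)$-coloring of $[n]$ to produce the rainbow solution $\left\{1,\tfrac{n-1}{2},\tfrac{n+1}{2},n+1\right\}$. If anything, your citation of Proposition~\ref{uniquelower1} (rather than the forward reference the paper makes) and your explicit verification of the side conditions are slightly more careful than the published proof.
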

    

Theorem \ref{result3} follows Proposition \ref{uniquelower1} and Lemma \ref{upper1}.
    \begin{thm}\label{result3}
        For $n \ge 5$, $\aS([n],eq) = \left\lfloor\dfrac{1}{2}(n+7)\right\rfloor$.
    \end{thm}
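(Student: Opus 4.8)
The plan is to split on the parity of $n$ and assemble the equality by pairing the lower bound of Corollary \ref{lower2} with the exact values and upper bounds already in hand. First I would record that Corollary \ref{lower2} supplies $\left\lfloor\frac{1}{2}(n+7)\right\rfloor \le \aS([n],eq)$ for every $n \ge 5$, so in each parity case only a matching upper bound is needed to pinch the value.

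For odd $n \ge 5$, the integer $n+7$ is even, hence $\frac{1}{2}(n+7)$ is itself an integer and equals $\left\lfloor\frac{1}{2}(n+7)\right\rfloor$. Proposition \ref{uniquelower1} then delivers $\aS([n],eq) = \frac{1}{2}(n+7)$ directly, which is exactly the claimed formula in this case.

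For even $n \ge 6$, I would write $n = m+1$ with $m = n-1$ odd and $m \ge 5$, so that Lemma \ref{upper1} applies with $m$ in place of $n$ and gives $\aS([n],eq) = \aS([m+1],eq) \le \frac{1}{2}(m+7) = \frac{n+6}{2}$. Because $n$ is even, $n+7$ is odd and $\left\lfloor\frac{1}{2}(n+7)\right\rfloor = \frac{n+6}{2}$, so the lower bound of Corollary \ref{lower2} reads $\frac{n+6}{2} \le \aS([n],eq)$. The upper and lower inequalities then force $\aS([n],eq) = \frac{n+6}{2} = \left\lfloor\frac{1}{2}(n+7)\right\rfloor$.

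The only point demanding care — and the natural place for an off-by-one to slip in — is tracking how the floor collapses in each parity (integer in the odd case, dropping a half in the even case) and checking that the shift $m = n-1$ fed into Lemma \ref{upper1} still meets its hypothesis $m \ge 5$, which holds since $n \ge 6$ in the even case. Beyond that bookkeeping there is no genuine obstacle: the substantive combinatorial content — the unique extremal coloring that pins down the odd-case value in Proposition \ref{uniquelower1}, and the rainbow solution exhibited in Lemma \ref{upper1} — has already been established, so this theorem is a direct corollary of those two results together with Corollary \ref{lower2}.
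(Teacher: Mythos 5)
Your proposal is correct and is precisely the argument the paper intends: it states only that Theorem \ref{result3} ``follows Proposition \ref{uniquelower1} and Lemma \ref{upper1},'' and your parity split --- Proposition \ref{uniquelower1} for odd $n$, Lemma \ref{upper1} combined with Corollary \ref{lower2} for even $n$ --- is exactly that deduction with the floor bookkeeping made explicit.
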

    


\section{Conclusion}

It is believed that future work on equations of the form $\sum_{i=1}^{k-1} x_i = x_k$, $k \ge 5$ should focus on the coloring exhibited in Theorem \ref{lem:genlower} and arguments should utilize $s_i$.  Data indicates that there are unique colorings, when $k=5$, for $n=9,12,15,\dots$  and colorings for other values of $n$ are closely related to the unique colorings.  Other questions to consider are more general equations like $\dsp\sum_{i=1}^k a_ix_k = b$.  It is not clear which, if any, techniques from this paper will be applicable in the more general situation.

\section*{Acknowledgements} 
Thank you to the referees for the helpful comments that have improved the paper, in particular for the simplification of the proof of Proposition \ref{uniquelower1}.  Thanks also to the University of Wisconsin-La Crosses's Undergraduate Research and Creativity grants which helped support this research.

\end{document}